\documentclass[11pt]{amsart}

\usepackage{microtype}
\usepackage{amsmath,amssymb,amsthm,mathtools}
\usepackage{enumitem}
\usepackage{booktabs}
\usepackage{array}
\usepackage{float}
\usepackage{graphicx}
\usepackage{xcolor}
\usepackage{stmaryrd}
\definecolor{LEBGray}{gray}{0.45}
\usepackage{tikz}
\usetikzlibrary{arrows.meta}
\usepackage[ruled,vlined,norelsize,noend]{algorithm2e}
\usepackage[hidelinks]{hyperref}
\usepackage[nameinlink,capitalise,noabbrev]{cleveref}

\setlist{itemsep=0.25em,topsep=0.35em}
\numberwithin{equation}{section}
\SetFuncSty{texttt}
\SetKw{Return}{return}
\crefname{algocf}{algorithm}{algorithms}
\Crefname{algocf}{Algorithm}{Algorithms}

\newtheorem{theorem}{Theorem}[section]
\newtheorem{lemma}[theorem]{Lemma}
\newtheorem{proposition}[theorem]{Proposition}
\newtheorem{corollary}[theorem]{Corollary}
\theoremstyle{definition}
\newtheorem{definition}[theorem]{Definition}

\theoremstyle{remark}

\newcommand{\T}{\mathcal T}
\newcommand{\M}{\mathcal M}
\newcommand{\Nnew}{\mathcal N}
\newcommand{\Dsc}{\mathcal D}
\newcommand{\diam}{\operatorname{diam}}
\newcommand{\gen}{\operatorname{gen}}
\newcommand{\Children}{\operatorname{children}}
\newcommand{\Refine}{\textnormal{\texttt{RefineLEB}}}
\newcommand{\RefineOne}{\textnormal{\texttt{RefineOneLEB}}}
\newcommand{\LEPP}{\textnormal{\texttt{BuildLEPP}}}
\newcommand{\card}{\#}
\newcommand{\R}{\mathbb R}

\title[Closure complexity of LEB with terminal-priority]
{Closure complexity of longest-edge bisection for triangular meshes}
\author[Y. Li]{Yuwen Li}
\address{School of Mathematical Sciences, Zhejiang University,
866 Yuhangtang Road, Hangzhou, Zhejiang 310058, People's Republic of China}
\email{liyuwen@zju.edu.cn}
\author[Z. Yang]{Zhiyuan Yang}
\address{School of Mathematical Sciences, Zhejiang University,
866 Yuhangtang Road, Hangzhou, Zhejiang 310058, People's Republic of China}
\email{zhiyuanyang@zju.edu.cn}
\keywords{longest-edge bisection, longest-edge propagation path, closure
complexity estimate, conforming mesh, geometric lattice, adaptive finite element method}
\subjclass[2020]{65Y20,  65N50, 65N30}

\begin{document}

\begin{abstract}
On triangular meshes, we analyze local mesh refinement based on longest-edge bisection equipped with the
serial longest-edge propagation-path closure.  Ties are resolved by terminal priority: if the incoming shared edge is a longest edge of the
neighboring triangle, the pair is declared terminal and that edge is bisected.
For every adaptive mesh sequence $\mathcal{T}_0, \mathcal{T}_1, \ldots, \mathcal{T}_L$ with a sequence of marked sets
$\mathcal{M}_0, \mathcal{M}_1, \ldots, \mathcal{M}_{L-1}$,
we prove the cumulative closure estimate
$$\#\mathcal{T}_L-\#\mathcal{T}_0
  \lesssim\sum_{\ell=0}^{L-1}\#\mathcal{M}_\ell.$$
The proof derives single-mark locality from the uniform multiplicative gap
between possible descendant diameters implied by finite similarity classes,
and converts this locality into the cumulative estimate through a Binev--Dahmen--DeVore-type charging argument.
\end{abstract}

\maketitle

\section{Introduction}\label{sec:introduction}

Adaptive finite element methods (AFEMs) concentrate computational effort by
repeatedly solving a discrete problem, estimating the error, marking selected
elements, and refining the mesh.  The refinement step must reconcile two
competing requirements.  It should remain local enough to preserve the
computational advantage of adaptivity, while at the same time restoring
conformity and maintaining a uniformly nondegenerate family of elements.  In
particular, the elements introduced only for conformity must not overwhelm
those selected by the marking strategy.

Let $\T_0,\ldots,\T_L$ be a sequence of conforming meshes such that, for
$0\le\ell<L$, $\T_{\ell+1}$ is obtained from $\T_\ell$ by applying a fixed
refinement rule to a marked set $\M_\ell\subseteq\T_\ell$.  The quantitative
form of controlled conformity closure is the cumulative estimate
\begin{equation*}
  \card\T_L-\card\T_0
  \le C_{\rm clos}
       \sum_{\ell=0}^{L-1}\card\M_\ell,
  \qquad L\ge1,
\end{equation*}
where the constant $C_{\rm clos}$ is independent of $L$ and of the marked sets, but may
depend on the fixed initial mesh and on the refinement rule.  This is
the Binev--Dahmen--DeVore/Stevenson closure estimate and is one of the standard
complexity inputs in rate-optimal AFEM analysis; see
\cite{BDD04,KPP13,Stevenson08,DGS25}.

Longest-edge bisection (LEB) is a natural geometric refinement rule for planar
triangular meshes.  A triangle is divided by joining the midpoint of one of its
longest edges to the opposite vertex.  Classical results show that repeated
LEB is uniformly angle nondegenerate and produces only finitely many similarity
classes from each initial triangle~\cite{RS75,Stynes80}.  To retain conformity
under local refinement, the  longest-edge propagation-path (LEPP)
procedure follows longest edges through neighboring triangles until it reaches
the boundary or an interior edge that is longest on both sides; the terminal
triangle or terminal pair is then bisected.  This propagation mechanism and
its locality properties have been studied extensively; see
\cite{Rivara84,SPC05,GGR07}.  In three dimensions, longest-edge bisection can
produce degenerating tetrahedra~\cite{Korotov26}.

The closure theory for other bisection rules does not transfer directly to
geometric LEB.  For newest-vertex and Maubach--Traxler bisection, the refinement
edge is encoded by inherited combinatorial data.  Reference edges, vertex
orderings, and generation relations can therefore be propagated through a
refinement tree and compared across neighboring elements.  These structures
underlie the closure analyses in
\cite{BDD04,KPP13,Stevenson08,DGS25}.  In LEB, by contrast, the
bisection edge is recomputed from the current geometry after every refinement.
A Binev--Dahmen--DeVore theorem for LEB was posed as an open problem in
\cite{Gehring23}.

The central difficulty is that the triangle forced by a fine mark may be
substantially coarser than that mark.  A local packing argument centered only
at the current mark then assigns too little charge to the forced refinement.
The proof must also account for earlier marked triangles, without using an
inherited reference edge.

We resolve this difficulty for a precisely specified serial Rivara rule.
The tie convention is part of the algorithm: when the incoming shared edge is
also a longest edge of the neighboring triangle, the path terminates and that
edge is bisected.  We call this convention \emph{terminal priority}.  Under
this convention, the diameter strictly increases at every nonterminal step.
Our main theoretical result is the following.

\begin{theorem}\label{thm:main-intro}
Let $\T_0$ be a finite conforming mesh of a bounded polygonal domain
$\Omega\subset\R^2$. For each $\ell$, call the terminal-priority longest-edge bisection  $\mathcal{T}_{\ell+1}=\Refine(\mathcal{T}_\ell,\mathcal{M}_\ell)$ in \Cref{alg:refine} with marked set
$\mathcal{M}_\ell\subseteq\T_\ell$. Then there is a constant
$C_{\rm LEB}=C_{\rm LEB}(\T_0)$ such that
\begin{equation}\label{eq:main-intro}
  \card\T_L-\card\T_0
  \le C_{\rm LEB}
       \sum_{\ell=0}^{L-1}\card\M_\ell
  \qquad\text{for every }L\ge1.
\end{equation}
\end{theorem}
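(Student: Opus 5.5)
The plan is to follow the Binev--Dahmen--DeVore charging scheme, replacing the generation/level structure of newest-vertex bisection with the geometric scales that LEB provides. Throughout, $\T_0$ is fixed. By the Rosenberg--Stenger/Stynes results \cite{RS75,Stynes80}, every triangle $T$ obtained from an element of $\T_0$ by repeated LEB belongs to one of finitely many similarity classes, and its angles are bounded below uniformly. Consequently $\diam T$ ranges over a finite union of geometric sequences $c_j\,\rho^{k}$. I will extract from this a uniform multiplicative gap: there is $\gamma>1$ such that two possible descendant diameters are either equal or differ by a factor at least $\gamma$. This lets diameter serve as a surrogate level. For each $T$ I set $\gen(T)=k$ where $\diam T\simeq \rho^{k}$, and record that $\diam$, area and $\gen$ are mutually comparable, uniformly in $T$.

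The first key step is single-mark locality. Consider one call of \RefineOne\ on a marked $T$ in a conforming mesh. Under terminal priority, the diameter strictly increases at every nonterminal step of \LEPP, so by the gap property it increases by a factor at least $\gamma$ per step. I then want to show that the path cannot stray far from $T$. Each triangle bisected during the closure of $T$ has diameter at most a constant times that of the next triangle on the path, and the path is a chain of edge-neighbours. Summing the geometric series of diameters gives
\[
\operatorname{dist}(T',T)\le C\,\diam T' \quad\text{for every } T' \text{ refined because of } T,
\]
and each such $T'$ has $\gen(T')\le \gen(T)$. It also follows that the number of triangles refined at generation $g$ because of $T$ is bounded by a constant: at most one path element per diameter scale, up to finitely many ties bounded through shape regularity. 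This constitutes the locality lemma.

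The second step is the charging argument. Let $\lambda$ be the generic sequence of all marks $T_1,T_2,\dots$ over all levels, and let $T'$ be any triangle created. Following BDD, I define
\[
\lambda(T',T)=\begin{cases}
2^{-(\gen(T)-\gen(T'))}, & \operatorname{dist}(T',T)\le B\,\diam T' \text{ and } \gen(T')\le \gen(T)+1,\\
0, & \text{otherwise.}
\end{cases}
\]
Two inequalities are needed. The upper bound $\sum_{T'}\lambda(T',T)\le C$ for each mark $T$ follows from shape regularity: at each generation only $O(1)$ triangles of that size lie within distance $B\,\diam T'$ of $T$, and the factor $2^{-(\cdot)}$ makes the sum over generations converge. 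The lower bound $\sum_{\text{marks }T}\lambda(T',T)\ge c>0$ for every created $T'$ is the substance of the proof. For it I trace, as in BDD, a chain $T'\to T_{i_1}\to T_{i_2}\to\cdots$. Here $T'$ is created because of some mark or closure. If the responsible mark is $T_{i_1}$ and $\gen(T_{i_1})$ is much larger than $\gen(T')$, the locality estimate puts $T_{i_1}$ within $C\diam T'$ of $T'$, and then the charge is small but accumulates. If instead the triangle forced to refine is a parent much coarser than the mark, I go back to the earlier step at which the fine mark's ancestor was itself created and iterate. The distances then form a geometric series, controlled via the gap $\gamma$, which keeps the chain inside a ball of radius $B\diam T'$ for suitable $B$. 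Moreover, generations increase by at most one per link until they exceed $\gen(T')$, so the weights sum to at least a constant. Given both inequalities, the standard double-counting argument yields \eqref{eq:main-intro}, because $\card\T_L-\card\T_0$ is at most the number of created triangles.

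I expect the main obstacle to be the lower bound in the chain argument, and specifically the claim that the generation of consecutive elements along the chain increases by at most one. For newest-vertex bisection this comes from compatibility of refinement edges. For LEB I would try to derive it from terminal priority as follows. When a coarse triangle $K$ is bisected inside a propagation path, its neighbour on the path has a longest edge equal to a longest edge of $K$. Its diameter is therefore comparable to that of $K$, and the finite-similarity gap then forces the neighbouring generations to differ by a bounded amount, say $\le m$. With this weaker bound I would replace $2^{-(\cdot)}$ by $2^{-(\cdot)/m}$ in the definition of $\lambda$, and the rest of the argument goes through unchanged.
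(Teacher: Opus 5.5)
Your architecture matches the paper's: finite similarity classes, then a uniform diameter gap, strict growth along a terminal-priority LEPP, single-mark locality, and Binev--Dahmen--DeVore charging with a packing upper bound. But the lower bound, which you rightly call the substance, is not established as you describe it. The claim that along the chain $T'\to T_{i_1}\to T_{i_2}\to\cdots$ ``the distances form a geometric series, controlled via the gap $\gamma$, which keeps the chain inside a ball of radius $B\diam T'$'' is false. The gap gives geometric growth only along a single LEPP. Take the chain of marks $K_0=T'$, with $K_r$ the mark of the call that created $K_{r-1}$ (the mark itself, not an ancestor of it). Along it, locality only gives $\gen(K_{r-1})\le\gen(K_r)+\Gamma$ and $|z_{K_{r-1}}-z_{K_r}|\le D2^{-\gen(K_{r-1})/2}$. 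Nothing prevents many consecutive marks from sharing one fine generation, so the chain can leave any fixed ball.

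The missing idea is a dichotomy. Put $t=\gen(T')$ and let $s$ be the first index with $\gen(K_s)<t$, so $t-\Gamma\le\gen(K_s)<t$. If $z_{K_1},\dots,z_{K_s}$ all lie in $\overline B(z_{T'},E2^{-t/2})$, then $K_s$ alone carries charge at least $1$. Otherwise, at the first exit index $j\le s$, the triangle inequality gives $E2^{-t/2}<D2^{-t/2}\bigl(1+\sum_{r=1}^{j-1}2^{(t-\gen(K_r))/2}\bigr)$. With $E=2D$, the charges of $K_1,\dots,K_{j-1}$ then sum to more than $1$.

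This argument forces two features of $\lambda$ that your definition lacks. First, the weight must dominate the diameter ratio: the paper takes $\lambda(T',M)=2^{(\gen(T')-\gen(M))/2}$ in bisection generations. Your weight $2^{-(\gen(T)-\gen(T'))}$ in a surrogate generation with $\diam\simeq\rho^k$ is the squared diameter ratio when $\rho=2^{-1/2}$, and then the exit case yields nothing. Second, the support condition must read $\gen(T')\le\gen(M)+\Gamma$, not $+1$; otherwise $K_s$ may receive zero charge.

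Finally, the step you flag as the main obstacle is not one. The bound $\gen(S)\le\gen(M)+\Gamma$, with $\Gamma=1+\lceil2\log_2(b/a)\rceil$, follows at once from $\diam(P)\ge\diam(M)$ for the bisected terminal triangle $P$, together with $a2^{-\gen(K)/2}\le\diam(K)\le b2^{-\gen(K)/2}$. Your own locality step already contains it. Comparing the two triangles of a terminal pair is beside the point. Replacing $2^{-(\cdot)}$ by $2^{-(\cdot)/m}$ changes the weight, not the support, so it does not repair the $+1$ in the support condition.
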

The finite-similarity-class theorem yields a uniform diameter gap and hence
scale-sensitive locality for the triangles created by one mark.  The cumulative
estimate follows by summing the corresponding bounds over all marks and all new
triangles.

To our knowledge, \Cref{thm:main-intro} is the first cumulative closure
estimate for geometric longest-edge bisection under the stated LEPP rule.  The
constant may depend on $\T_0$, rather than only on its minimum angle, as is
standard in the AFEM framework.  The only imported mathematical result is the
finite-similarity-class theorem; all locality and charging estimates needed
for closure are proved below.

The paper is organized as follows.  \Cref{sec:leb-algorithms} defines the
meshes, generations, and complete serial LEPP algorithm.
\Cref{sec:geometry} develops the diameter spectrum, geometric path growth, and
single-mark locality.  \Cref{sec:charging} proves the cumulative estimate by
the two-sided charging argument.  \Cref{sec:experiments} gives the numerical
comparison on West Lake.

\section{Preliminaries}
\label{sec:leb-algorithms}

\subsection{Conforming meshes and bisection generations}

A conforming mesh $\T$ is a finite family of closed, nondegenerate triangles
with pairwise disjoint interiors such that the intersection of two distinct
triangles is empty, a common vertex, or a common complete edge.  We write
$\overline\Omega:=\bigcup_{K\in\T}K$ and assume that
$\Omega\subset\R^2$ is a bounded polygonal domain.  Thus hanging nodes are
excluded.  We fix an initial mesh $\T_0$.

An edge of $\T$ is called a boundary edge if it is contained in $\partial\Omega$,
and an interior edge otherwise. We write $|K|$ for triangle area, $|e|$ for edge length, $|x|$ for the
Euclidean norm, and $\card A$ for the cardinality of a finite set $A$.

If $K$ has vertices $v_0,v_1,v_2$ and $e=[v_i,v_j]$ is a longest edge, the
longest-edge bisection of $K$ connects the midpoint of $e$ to the opposite vertex.
More precisely, after relabeling so that $K=[v_0,v_1,v_2]$ and
$e=[v_0,v_1]$, with $m:=\operatorname{mid}(e)$, write
\[
  \Children(K,e):=\bigl\{[v_0,m,v_2],[m,v_1,v_2]\bigr\}.
\]
The two children have equal area $|K|/2$.

\begin{figure}[!htbp]
\centering
\begin{tikzpicture}[scale=0.92,line cap=round,line join=round,>=Latex]
  \begin{scope}
    \coordinate (v0) at (0,0);
    \coordinate (v1) at (3.4,0);
    \coordinate (v2) at (0.85,2.1);
    \coordinate (m) at (1.7,0);
    \draw[thick] (v0)--(v1)--(v2)--cycle;
    \draw[ultra thick] (v0)--(v1);
    \fill (m) circle (1.8pt);
    \node[below left] at (v0) {$v_0$};
    \node[below right] at (v1) {$v_1$};
    \node[above] at (v2) {$v_2$};
    \node[below] at (m) {$m$};
    \node at (1.25,0.82) {$K$};
    \node at (1.7,-0.65) {longest edge $e$};
  \end{scope}
  \draw[->,very thick] (4.05,1.0)--(5.15,1.0)
    node[midway,above,font=\small] {bisect};
  \begin{scope}[xshift=5.8cm]
    \coordinate (w0) at (0,0);
    \coordinate (w1) at (3.4,0);
    \coordinate (w2) at (0.85,2.1);
    \coordinate (n) at (1.7,0);
    \draw[thick] (w0)--(w1)--(w2)--cycle;
    \draw[ultra thick] (w0)--(n);
    \draw[ultra thick] (n)--(w1);
    \draw[thick,densely dashed,LEBGray] (n)--(w2);
    \fill (n) circle (1.8pt);
    \node[below left] at (w0) {$v_0$};
    \node[below right] at (w1) {$v_1$};
    \node[above] at (w2) {$v_2$};
    \node[below] at (n) {$m$};
    \node at (0.78,0.72) {$K_1$};
    \node at (2.05,0.72) {$K_2$};
  \end{scope}
\end{tikzpicture}
\caption{Longest-edge bisection of $K$.  The selected edge $e=[v_0,v_1]$
is drawn thick; the dashed segment joins its midpoint $m$ to the opposite
vertex.  The two children are $K_1=[v_0,m,v_2]$ and $K_2=[m,v_1,v_2]$.}
\label{fig:leb-bisection}
\end{figure}

For each triangle $K$ obtained from $\T_0$ by successive bisections, let
$R(K)\in\T_0$ be the unique initial triangle containing $K$.  The generation
$\gen(K)$ is the number of successive bisections from $R(K)$ to $K$.  Thus
\[
  \gen(K)=0\quad(K\in\T_0),
  \qquad
  \gen(K')=\gen(K)+1
\]
whenever $K'$ is a child of $K$.  Hence
\begin{equation}\label{eq:area-generation}
  |K|=2^{-\gen(K)}|R(K)|.
\end{equation}
We write $\diam(K)$ for the longest-edge length and $z_K$ for the
barycenter of $K$.

\subsection{Propagation paths and terminal priority}

Let $\T$ be a conforming mesh.  For an interior edge $e$ of
$K\in\T$, let $K^*$ denote the unique triangle in $\T\setminus\{K\}$
sharing $e$.

\begin{definition}[Terminal-priority LEPP and terminal edge]\label{def:lepp}
Starting from $K_0\in\T$, construct a sequence recursively as follows.  At
$K_j$, choose any longest edge $e_j$ of $K_j$.
\begin{enumerate}[label=(\roman*)]
\item If $e_j\subset\partial\Omega$, stop.  Then $K_j$ is the terminal
      triangle, $e_j$ is the terminal edge, and the terminal set is
      $\mathcal S=\{K_j\}$.
\item Otherwise, if $e_j$ is also a longest edge of $K_j^*$, stop.
      Then $(K_j,K_j^*)$ is the
      terminal pair, $e_j$ is the terminal edge, and the terminal set is
      $\mathcal S=\{K_j,K_j^*\}$.
\item If $e_j$ is not a longest edge of $K_j^*$, set $K_{j+1}:=K_j^*$ and
      continue.
\end{enumerate}
The resulting sequence $(K_0,\ldots,K_J)$ is called a longest-edge
propagation path (LEPP) with terminal priority.  In case~(iii),
$K_j\to K_{j+1}$ is called a nonterminal step.
\end{definition}

This is the Rivara--LEPP convention; see \cite{Rivara84,Rivara97,SPC05}.

\begin{figure}[!htbp]
\centering
\begin{tikzpicture}[scale=0.82,line cap=round,line join=round,>=Latex]
  \begin{scope}
    \coordinate (a) at (0,0);
    \coordinate (u) at (1.25,1.2);
    \coordinate (d) at (1.25,-1.0);
    \coordinate (b) at (3.55,0);
    \draw[thick] (a)--(u)--(b)--(d)--cycle;
    \draw[ultra thick] (u)--(d);
    \draw[ultra thick] (u)--(b);
    \draw[->,thick] (0.73,0.05)--(1.75,0.05);
    \node at (0.75,-0.30) {$K_j$};
    \node at (2.25,-0.30) {$K_j^*$};
    \node[left] at (1.20,0.40) {$e_j$};
    \node[align=center,font=\small,text width=5.4cm] at (1.75,-1.30)
      {nonterminal step};
  \end{scope}
  \begin{scope}[xshift=7.1cm]
    \coordinate (a) at (0,0);
    \coordinate (u) at (1.25,1.2);
    \coordinate (d) at (1.25,-1.0);
    \coordinate (b) at (2.75,0);
    \coordinate (m) at (1.25,0.1);
    \draw[thick] (a)--(u)--(b)--(d)--cycle;
    \draw[ultra thick] (u)--(d);
    \fill (m) circle (1.7pt);
    \draw[thick,densely dashed,LEBGray] (a)--(m)--(b);
    \node at (0.75,-0.30) {$K_J$};
    \node at (2.05,-0.30) {$K_J^*$};
    \node[left] at (1.20,0.40) {$e_J$};
    \node[align=center,font=\small,text width=5.2cm] at (1.38,-1.30)
      {terminal pair};
  \end{scope}
\end{tikzpicture}
\caption{A nonterminal step (left) and a terminal pair (right).
Thick edges are longest in their triangles; dashed segments are new edges.}
\label{fig:terminal-priority}
\end{figure}

The following algorithm implements Definition~\ref{def:lepp} and returns
the path, terminal edge, and terminal set.

\begin{algorithm}[!htbp]
\caption{Build a terminal-priority LEPP \LEPP}
\label{alg:lepp}
\KwIn{A conforming mesh $\T$ and a starting triangle $K_0\in\T$}
\KwOut{a LEPP $\mathcal P$, its terminal edge $e$, and its terminal set
$\mathcal S$}
$K\gets K_0$ and $\mathcal P\gets(K_0)$\;
\While{true}{
  Choose any longest edge $e$ of $K$\;
  \If{$e\subset\partial\Omega$}{
    $\mathcal S\gets\{K\}$\;
    \Return{$(\mathcal P,e,\mathcal S)$}\;
  }
  Find the neighbor $K^*$\;
  \If{$e$ is a longest edge of $K^*$}{
    $\mathcal S\gets\{K,K^*\}$\;
    \Return{$(\mathcal P,e,\mathcal S)$}\;
  }
  $K\gets K^*$ and append $K$ to $\mathcal P$\;
}
\end{algorithm}

At the second \texttt{if} statement in \Cref{alg:lepp}, the selected edge $e$
is accepted whenever it ties for longest in $K^*$.  Thus the diameter strictly
increases at every nonterminal step.  The path cannot revisit a triangle,
so the loop terminates because the current mesh is finite.

A triangle $M\in\T$ selected for refinement is called a marked triangle.
A subset $\M\subseteq\T$ of marked triangles is called a marked set.  In a
call $\RefineOne(\T,M)$, the triangle $M$ is the mark of the call.

\begin{algorithm}[!htbp]
\caption{Single-mark refinement \RefineOne}
\label{alg:refine-one}
\KwIn{A conforming mesh $\T$ and a marked triangle $M\in\T$}
\KwOut{upon termination, the conforming mesh $\RefineOne(\T,M)$ in which
$M$ has been bisected}
$\widehat{\T}\gets\T$\;
\While{$M\in\widehat{\T}$}{
  $(\mathcal P,e,\mathcal S)\gets\LEPP(\widehat{\T},M)$\;
  $\mathcal C\gets\displaystyle\bigcup_{K\in\mathcal S}\Children(K,e)$\;
  $\widehat{\T}\gets(\widehat{\T}\setminus\mathcal S)\cup\mathcal C$\;
}
\Return{$\widehat{\T}$}\;
\end{algorithm}

In \Cref{alg:refine-one}, a boundary terminal edge is split in its single
adjacent triangle; an interior terminal edge is split simultaneously in both
adjacent triangles.  No other old
edge is subdivided, so every intermediate mesh is conforming.  An iteration
either bisects $M$ or leaves it unchanged.  Every triangle bisected during the
call thus belongs to the terminal set of a current LEPP starting at $M$.

\begin{algorithm}[!htbp]
\caption{Marked-set refinement \Refine}
\label{alg:refine}
\KwIn{A conforming mesh $\T$ and a marked set
$\M=\{M_1,\ldots,M_r\}\subseteq\T$ in a fixed serial order}
\KwOut{the conforming mesh $\T^+=\Refine(\T,\M)$}
$\T^+\gets\T$\;
\For{$i\gets1$ \KwTo $r$}{
  \If{$M_i\in\T^+$}{
    $\T^+\gets\RefineOne(\T^+,M_i)$\;
  }
}
\Return{$\T^+$}\;
\end{algorithm}

We call the refinement rule of \Cref{alg:lepp,alg:refine-one,alg:refine}
terminal-priority longest-edge bisection (TP-LEB).
A marked triangle already bisected by an earlier call is skipped in
\Cref{alg:refine}.  The estimates are uniform over all longest-edge choices
permitted by \Cref{alg:lepp} and all fixed serial orders of the marked set in
\Cref{alg:refine}.  Termination of \RefineOne{} is established in
\Cref{cor:refine-one-termination}.

\section{Geometric structure and single-mark locality}
\label{sec:geometry}

\subsection{Finite similarity classes and diameter scales}

For a nondegenerate triangle $K$, let $\Dsc(K)$ denote $K$ and all triangles
obtainable from $K$ by successive longest-edge bisections, including both
children and all tied-longest choices.
A triangle is a possible descendant of $\T_0$ if it belongs to $\Dsc(T)$
for some $T\in\T_0$.

\begin{theorem}[Finite similarity classes]\label{thm:FSC}
For every nondegenerate triangle $K$, the set $\Dsc(K)$ contains
only finitely many similarity classes.
\end{theorem}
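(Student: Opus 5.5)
The plan is to encode a triangle's similarity class by its shape and to show that longest-edge bisection acts on shapes through a map whose orbits stay in a finite set. Normalize every triangle by a similarity so that a longest edge is $[0,1]$ on the real axis and the opposite vertex $z$ lies in the upper half plane. The vertex then lies in the region
\[
  \Sigma:=\{z:\ \operatorname{Im}z>0,\ |z|\le1,\ |z-1|\le1\}.
\]
Reflecting about $\operatorname{Re}z=1/2$ (which preserves similarity class), we may further assume $\operatorname{Re}z\le 1/2$. Each similarity class then corresponds to finitely many points of this reduced region, at most one per tied-longest-edge choice. The two children of the triangle with vertices $0,1,z$ are $[0,1/2,z]$ and $[1/2,1,z]$. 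Each child is renormalized by its own longest edge, which is how tied choices enter. This gives a finite-valued multimap $\Phi$ on the reduced region, and $\Dsc(K)$ up to similarity is the forward orbit of the point for $K$. So it suffices to show every forward orbit of $\Phi$ is finite.

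The first step is to record the classical geometric facts due to Rosenberg--Stenger and Stynes. Longest-edge bisection never decreases the minimum angle below half the initial minimum angle. In shape coordinates this means the orbit stays in a compact subset $\Sigma_\alpha$ bounded away from the real axis.

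The second and central step is a monotonicity or absorption argument. I would partition the reduced region by the circle $|z-1/2|=1/2$ (right angle at $z$) and the circle $|z-1|=1/2$, which decides whether the segment from $z$ to $1/2$ is a longest edge of a child. I would then check the following by explicit computation in each piece.

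\begin{itemize}
\item One child of $[0,1,z]$ is always similar to a triangle in a short explicit list determined by $z$: either a ``new'' shape, or one similar to $K$ itself or to a previously generated shape.
\item The other child either returns to a previously generated shape or moves strictly toward a region where, after at most one or two further bisections, all children are similar to triangles already seen.
\end{itemize}

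Concretely, I would show that after at most a bounded number of generations, depending on the minimum angle through $\Sigma_\alpha$, every triangle lies in the ``terminal'' zone. That zone consists of nearly equilateral or right-type shapes, where the 4-triangle pattern of two successive bisections reproduces similar copies (Stynes' argument). Counting the distinct shapes produced before absorption then gives finiteness, and in fact an explicit bound.

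The main obstacle is this case analysis. Near the boundary curves the longest edge of a child is tied, so the multimap branches and shapes may not be absorbed monotonically. My first attempt would be to use Stynes' potential, which is essentially the smallest angle or the distance of $z$ to the terminal region. I would show it changes by at least a fixed amount $\delta(\alpha)>0$ at each non-absorbing step, so that the number of non-absorbing steps is at most $\pi/\delta$. Since the theorem is classical and the paper explicitly imports it, the cleanest course is to cite \cite{RS75,Stynes80} for the absorption statement. The only additional point to verify is that including all tied-longest choices does not break it: ties occur only on the boundary curves, where each alternative choice yields a mirror image or a triangle already covered by the same case analysis, so the orbit remains finite.
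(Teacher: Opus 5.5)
Your fallback matches the paper. The paper gives no proof of this theorem; it imports it from Stynes~\cite{Stynes80} (see also \cite{GGR07,KS26}), so citing \cite{RS75,Stynes80} for the absorption step leaves you exactly where the paper is.

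Your remark on ties is correct and worth stating precisely, since the paper leaves it implicit. If two edges tie for longest, the triangle is isosceles with those two edges as its equal sides. The reflection in its symmetry axis swaps them, so the two admissible pairs of children are mirror images of each other. Bisection commutes with similarities, so induction on the generation shows that the set of similarity classes in $\Dsc(K)$ does not depend on tie-breaking. Hence the cited theorem covers $\Dsc(K)$ with all tied choices, whatever tie rule it assumes.

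The self-contained sketch, however, should not be offered as a proof.
\begin{itemize}
\item The claim that some potential drops by a fixed $\delta(\alpha)$ at every non-absorbing step is the entire content of Stynes's theorem, and you leave it unverified.
\item Your partition is misplaced. With $\operatorname{Re} z\le 1/2$ one has $|z-1|\ge 1-\operatorname{Re} z\ge 1/2$, with equality only at the degenerate point $z=1/2$. So the circle $|z-1|=1/2$ never meets the reduced region.
\item In fact $[1,z]$ is always the unique longest edge of $[1/2,1,z]$. Whether the median $[z,1/2]$ is a longest edge of $[0,1/2,z]$ is decided by the line $\operatorname{Re} z=1/4$ and the circle $|z-1/2|=1/2$.
\end{itemize}
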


This theorem is due to Stynes~\cite{Stynes80}; see also
\cite{GGR07,KS26}.

For a triangle $K$, the quantity $\diam(K)/|K|^{1/2}$ is invariant under
similarity.  Combining Theorem~\ref{thm:FSC} with
\eqref{eq:area-generation} gives the following lemma.

\begin{lemma}[Finite $\sqrt2$-lattice spectrum]\label{lem:spectrum}
There is a finite set $\Sigma=\Sigma(\T_0)\subset(0,\infty)$ such that every
possible descendant $K$ of $\T_0$ satisfies
\begin{equation*}
  \diam(K)=\sigma2^{-\gen(K)/2}
  \qquad\text{for some }\sigma\in\Sigma.
\end{equation*}
\end{lemma}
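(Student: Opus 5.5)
The plan is to use the similarity invariant $\rho(K):=\diam(K)/|K|^{1/2}$ mentioned just before the statement. Combined with \eqref{eq:area-generation}, it gives $\diam(K)$ in the form $\sigma 2^{-\gen(K)/2}$. Theorem~\ref{thm:FSC} then makes the set of possible values of $\sigma$ finite.

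Fix a possible descendant $K$ of $\T_0$. By definition, $K\in\Dsc(T)$ for some $T\in\T_0$. Since bisection stays inside the parent, $T=R(K)$. By \eqref{eq:area-generation},
\[
\diam(K)=\rho(K)\,|K|^{1/2}=\rho(K)\,|R(K)|^{1/2}\,2^{-\gen(K)/2}.
\]
So it suffices to set
\[
\Sigma:=\bigl\{\rho(K)\,|T|^{1/2}: T\in\T_0,\ K\in\Dsc(T)\bigr\}
\]
and to show that this set is finite and contained in $(0,\infty)$.

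Positivity holds because every triangle in $\Dsc(T)$ is nondegenerate: a bisection child of a nondegenerate triangle is nondegenerate. For finiteness, fix $T\in\T_0$. By Theorem~\ref{thm:FSC}, $\Dsc(T)$ meets only finitely many similarity classes. Since $\rho$ is invariant under similarity (both $\diam$ and $|K|^{1/2}$ scale linearly under dilation and are preserved by isometries), the set $\{\rho(K):K\in\Dsc(T)\}$ is finite. Because $\T_0$ is finite, $\Sigma$ is a finite union of finite sets, hence finite.

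There is no genuine obstacle; the argument is bookkeeping. The only point to check is that the generation used in \eqref{eq:area-generation} counts bisections from $R(K)$ along any admissible sequence, including tied-longest-edge choices. This holds because each bisection halves the area regardless of which longest edge is chosen, so \eqref{eq:area-generation} applies to every element of $\Dsc(T)$ and not just to those occurring in the actual meshes.
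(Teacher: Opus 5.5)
Your proposal is correct and follows essentially the paper's argument: it combines the similarity invariant $\diam(K)/|K|^{1/2}$ with \eqref{eq:area-generation} to get $\diam(K)=\sigma\,2^{-\gen(K)/2}$, then uses Theorem~\ref{thm:FSC} and the finiteness of $\T_0$ to make the set of $\sigma$'s finite. The only difference is bookkeeping: you take $\Sigma$ to be the full image set, whereas the paper picks one representative $S$ per similarity class. Your added remarks that $R(K)=T$ and that generation is well defined for tied-edge choices (since the area halves) are correct and fill in details the paper leaves implicit.
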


\begin{proof}
Fix $T\in\T_0$.  Choose one representative $S$ from each of the finitely many
similarity classes in $\Dsc(T)$.  If $K$ lies in the class of $S$, similarity
and \eqref{eq:area-generation} give
\[
  \diam(K)
  =\frac{\diam(S)}{|S|^{1/2}}|K|^{1/2}
  =\left(\frac{\diam(S)}{|S|^{1/2}}|T|^{1/2}\right)
   2^{-\gen(K)/2}.
\]
Collect the parenthesized constants over the finitely many classes and the
finitely many initial triangles $T\in\T_0$.
\end{proof}

Fix such a set $\Sigma$, and retain the notation
$a=\min\Sigma$ and $b=\max\Sigma$.  Then every possible descendant $K$ of
$\T_0$ satisfies
\begin{equation}\label{eq:diam-gen}
  a2^{-\gen(K)/2}\le \diam(K)\le b2^{-\gen(K)/2}.
\end{equation}

The essential consequence is stronger than shape regularity: distinct possible
diameters cannot be arbitrarily close multiplicatively.

\begin{lemma}[Uniform diameter gap]\label{lem:rho}
There exists $\rho=\rho(\T_0)>1$ such that, for any two possible descendant
diameters $h_1<h_2$,
\begin{equation*}
  h_2\ge\rho h_1.
\end{equation*}
\end{lemma}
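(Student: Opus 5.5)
By \Cref{lem:spectrum}, every possible descendant diameter has the form $h=\sigma 2^{-g/2}$ with $\sigma\in\Sigma$ and $g\in\mathbb{Z}_{\ge0}$. The plan is to show that the set of all such ratios between possible diameters is a discrete subset of $(0,\infty)$ bounded away from $1$ on both sides. Concretely, I would form the finite set
\[
  \Lambda:=\{\sigma/\sigma' : \sigma,\sigma'\in\Sigma\}\subset(0,\infty).
\]
Any quotient of two possible diameters then has the form
\[
  \frac{h_2}{h_1}=\lambda\,2^{k/2}\qquad\text{with }\lambda\in\Lambda,\ k=\gen(K_1)-\gen(K_2)\in\mathbb{Z}.
\]

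The key step is that the set $G:=\{\lambda 2^{k/2}:\lambda\in\Lambda,\ k\in\mathbb{Z}\}$ has no accumulation point at $1$. I would argue this as follows. For each fixed $\lambda\in\Lambda$, the points $\lambda 2^{k/2}$ form a geometric progression with ratio $\sqrt2$. This progression therefore meets the interval $[2^{-1/2},2^{1/2}]$ in at most two (in fact finitely many) points. Taking the union over the finitely many $\lambda\in\Lambda$, the set $G\cap[2^{-1/2},2^{1/2}]$ is finite.

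Now define
\[
  \rho:=\min\Bigl(\{\sqrt2\}\cup\{x\in G: 1<x\le\sqrt2\}\Bigr).
\]
This is a minimum over a finite set of numbers strictly greater than $1$, so $\rho>1$. It depends only on $\Sigma$, hence only on $\T_0$ once $\Sigma$ is fixed.

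To conclude, take possible diameters $h_1<h_2$. Then $x:=h_2/h_1\in G$ and $x>1$. If $x>\sqrt2$, then $x\ge\rho$ trivially. Otherwise $x$ belongs to the finite set over which $\rho$ was minimized, so again $x\ge\rho$.

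There is no serious obstacle. The one point to check is that \Cref{lem:spectrum} applies to descendants of different initial triangles, possibly with different generations. It does, since $\Sigma$ was assembled over all $T\in\T_0$ and the exponent difference $k$ is an arbitrary integer. The essential input is the finiteness of $\Sigma$, which comes from \Cref{thm:FSC}. Without it, the ratios could accumulate at $1$.
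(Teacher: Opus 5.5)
Your proof is correct and is essentially the paper's argument: the paper also writes $h_2/h_1=(\tau/\sigma)2^{m/2}$, takes for each of the finitely many pairs $(\sigma,\tau)$ the least term of that $\sqrt2$-progression exceeding $1$, and minimizes over pairs, which yields the same $\rho$ as your minimum over $G\cap(1,\sqrt2]$. One harmless slip: such a progression can meet the closed interval $[2^{-1/2},2^{1/2}]$ in three points rather than two (take $\lambda=1$), but your argument only uses finiteness.
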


\begin{proof}
Set $k:=h_2/h_1$.  By Lemma~\ref{lem:spectrum},
\[
  k=\frac{\tau}{\sigma}2^{m/2}>1
  \qquad\text{for some }\sigma,\tau\in\Sigma,\ m\in\mathbb Z.
\]
For fixed $(\sigma,\tau)$, the constraint $(\tau/\sigma)2^{m/2}>1$
is equivalent to $m>2\log_2(\sigma/\tau)$ and hence has a least integer
solution.  Since $(\tau/\sigma)2^{m/2}$ is strictly increasing in $m$,
\[
  \rho_{\sigma,\tau}:=
  \min_{\substack{m\in\mathbb Z\\(\tau/\sigma)2^{m/2}>1}}
  \frac{\tau}{\sigma}2^{m/2}>1.
\]
Finally, $\rho:=\min_{\sigma,\tau\in\Sigma}\rho_{\sigma,\tau}>1$,
since $\Sigma$ is finite.  Thus $k\ge\rho$, which yields $h_2\ge\rho h_1$.
\end{proof}

\subsection{Geometric growth along a LEPP}

In this subsection, LEPPs are taken on conforming meshes of longest-edge
descendants of $\T_0$.

\begin{lemma}[Strict geometric growth]\label{lem:path-growth}
Let $(K_0,\ldots,K_J)$ be a LEPP as in
Definition~\ref{def:lepp}.  Then
\begin{equation*}
  \diam(K_{j+1})\ge\rho\diam(K_j),
  \qquad 0\le j<J,
\end{equation*}
where $\rho>1$ is from Lemma~\ref{lem:rho}.  Hence
\begin{equation*}
  \sum_{j=0}^{J}\diam(K_j)
  \le \frac{1}{1-\rho^{-1}}\diam(K_J).
\end{equation*}
\end{lemma}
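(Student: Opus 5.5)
The plan is to first show that each nonterminal step strictly increases the diameter, then upgrade this to a factor $\rho$ with Lemma~\ref{lem:rho}, and finally sum a geometric series.

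Fix a nonterminal step $K_j\to K_{j+1}=K_j^*$, and let $e_j$ be the chosen longest edge of $K_j$, so that $\diam(K_j)=|e_j|$. Since $e_j$ is an edge of $K_{j+1}$, we have $\diam(K_{j+1})\ge |e_j|$. We are in case~(iii) of Definition~\ref{def:lepp}, so $e_j$ is not a longest edge of $K_{j+1}$. Hence some other edge of $K_{j+1}$ is strictly longer than $e_j$, which gives
\[
  \diam(K_{j+1})>|e_j|=\diam(K_j).
\]
This is exactly where terminal priority matters. A tie $|e_j|=\diam(K_j^*)$ would have triggered case~(ii), so it cannot occur at a nonterminal step.

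Next we invoke Lemma~\ref{lem:rho}. Every triangle in the current mesh is a possible descendant of $\T_0$, because the meshes are formed by longest-edge bisections starting from $\T_0$. So $h_1=\diam(K_j)$ and $h_2=\diam(K_{j+1})$ are two possible descendant diameters with $h_1<h_2$, and the lemma gives $\diam(K_{j+1})\ge\rho\diam(K_j)$. I expect this to be the only delicate point: the mesh must consist of possible descendants of $\T_0$, so that the uniform gap applies. This is guaranteed by the standing assumption of the subsection.

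Iterating the bound gives $\diam(K_j)\le\rho^{-(J-j)}\diam(K_J)$ for $0\le j\le J$. Summing over $j$ and comparing with the full geometric series yields
\[
  \sum_{j=0}^J\diam(K_j)\le\diam(K_J)\sum_{i=0}^{\infty}\rho^{-i}=\frac{1}{1-\rho^{-1}}\diam(K_J).
\]
The series converges because $\rho>1$.
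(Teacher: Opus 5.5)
Your proposal is correct and follows the paper's proof: strict increase $\diam(K_j)=|e_j|<\diam(K_{j+1})$ because case~(iii) excludes ties, then the gap $\rho$ from Lemma~\ref{lem:rho} (valid since all triangles are possible descendants of $\T_0$), then summing the geometric series backwards from $\diam(K_J)$. You also state explicitly where terminal priority and the standing descendant assumption are used; the paper leaves both implicit.
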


\begin{proof}
The edge $e_j$ is longest in $K_j$ but not in $K_{j+1}$, so
$\diam(K_j)=|e_j|<\diam(K_{j+1})$.
Lemma~\ref{lem:rho} gives the growth bound.  Summing the resulting
geometric series backwards from $\diam(K_J)$ proves the second bound.
\end{proof}

\begin{corollary}[Reach of a propagation path]\label{cor:path-reach}
There is a constant $C_{\rm path}=C_{\rm path}(\T_0)>0$ such that, for every
LEPP starting at $M$ with terminal set $\mathcal S$ and every $P\in\mathcal S$,
\begin{equation*}
  \diam(P)\ge\diam(M),
  \qquad
  |z_P-z_M|\le C_{\rm path}\diam(P).
\end{equation*}
\end{corollary}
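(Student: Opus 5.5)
The plan is to read both inequalities off \Cref{lem:path-growth}, applied to the LEPP $(K_0,\ldots,K_J)$ with $K_0=M$ and terminal edge $e=e_J$. The terminal set $\mathcal S$ is either $\{K_J\}$ or $\{K_J,K_J^*\}$, so there are two cases for $P$.

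\emph{Diameter bound.} If $P=K_J$, then the monotonicity $\diam(K_{j+1})\ge\rho\diam(K_j)\ge\diam(K_j)$ from \Cref{lem:path-growth} gives $\diam(P)\ge\diam(M)$ by induction along the path. If $P=K_J^*$ in a terminal pair, then $e_J$ is a longest edge of both $K_J$ and $K_J^*$. Hence $\diam(K_J^*)=|e_J|=\diam(K_J)\ge\diam(M)$.

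\emph{Distance bound.} Since every triangle is convex, any two points of the same triangle are at distance at most its diameter. Consecutive triangles $K_j,K_{j+1}$ share the edge $e_j$, so for any point $x\in e_j$ we have
\[
|z_{K_j}-z_{K_{j+1}}|\le|z_{K_j}-x|+|x-z_{K_{j+1}}|\le\diam(K_j)+\diam(K_{j+1}).
\]
Summing over the path with the triangle inequality and using the geometric-sum bound of \Cref{lem:path-growth} gives
\[
|z_{K_J}-z_M|\le 2\sum_{j=0}^{J}\diam(K_j)\le\frac{2}{1-\rho^{-1}}\diam(K_J).
\]
For $P=K_J$ this is already the claim. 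For $P=K_J^*$, the two triangles share $e_J$, so the same argument adds at most $\diam(K_J)+\diam(K_J^*)=2\diam(P)$. Using $\diam(K_J)=\diam(P)$, we can take
\[
C_{\rm path}:=\frac{2}{1-\rho^{-1}}+2,
\]
which depends only on $\rho=\rho(\T_0)$.

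The only point needing care is that \Cref{lem:path-growth} requires the LEPP to be taken on a conforming mesh of possible descendants of $\T_0$. This holds because all meshes produced by the algorithms are obtained from $\T_0$ by longest-edge bisections and remain conforming. Beyond that, the argument is routine. The main step is the geometric-series control of the chain of barycenter distances, which is exactly what \Cref{lem:path-growth} supplies.
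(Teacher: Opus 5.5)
Your proposal is correct and follows essentially the same route as the paper. You chain the barycenter distances along the path using $|z_K-z_{K'}|\le\diam(K)+\diam(K')$, bound the sum by the geometric series from Lemma~\ref{lem:path-growth}, and add $2\diam(P)$ for the partner in a terminal pair, which yields the same constant $C_{\rm path}=2+2/(1-\rho^{-1})$.
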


\begin{proof}
Let $(K_0,\ldots,K_J)$ be the LEPP, with $K_0=M$.
Adjacent triangles $K,K'$ satisfy
\[
  |z_K-z_{K'}|\le\diam(K)+\diam(K').
\]
By Definition~\ref{def:lepp}, $\diam(P)=\diam(K_J)$ and
$|z_P-z_{K_J}|\le2\diam(P)$.
Lemma~\ref{lem:path-growth} yields $\diam(P)\ge\diam(M)$ and
\begin{align*}
  |z_P-z_M|
  &\le 2\diam(P)+2\sum_{j=0}^{J}\diam(K_j)\\
  &\le \left(2+\frac{2}{1-\rho^{-1}}\right)\diam(P).
\end{align*}
Hence one may take $C_{\rm path}:=2+2/(1-\rho^{-1})$.
\end{proof}

\subsection{Generation and spatial locality for one mark}

In this subsection, let $\T$ be any conforming mesh obtainable from $\T_0$
by finitely many longest-edge bisections, and let $M\in\T$.
We first estimate the triangles created by $\RefineOne(\T,M)$,
without assuming that the call terminates.

\begin{lemma}[Single-mark locality]
\label{lem:single-mark}
There are constants $\Gamma\in\mathbb N$ and $D>0$, depending only on
$\T_0$, such that, in every iteration of the while loop in
\Cref{alg:refine-one} with input $(\T,M)$, every triangle
$S\in\mathcal C$ satisfies
\begin{align*}
  \gen(S)&\le\gen(M)+\Gamma,\\
  |z_S-z_M|&\le D\,2^{-\gen(S)/2}.
\end{align*}
\end{lemma}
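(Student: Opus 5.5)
The plan is to work with a single iteration of the while loop in \Cref{alg:refine-one}. Let $\widehat\T$ be the current mesh at the start of that iteration; the loop condition gives $M\in\widehat\T$. The first thing to check is that $\widehat\T$ is a conforming mesh consisting of possible descendants of $\T_0$. This follows by induction over previous iterations, since every step replaces triangles of the terminal set by their longest-edge children. Consequently \Cref{lem:spectrum}, \eqref{eq:diam-gen}, \Cref{lem:path-growth} and \Cref{cor:path-reach} all apply to the LEPP $(\mathcal P,e,\mathcal S)=\LEPP(\widehat\T,M)$. The argument does not use termination of the whole call: it only uses the structure of one LEPP in one intermediate mesh. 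Every $S\in\mathcal C$ is a child of some $P\in\mathcal S$, so $\gen(S)=\gen(P)+1$ and $S\subset P$.

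For the generation bound, I would apply \Cref{cor:path-reach} to get $\diam(P)\ge\diam(M)$, and then use the two sides of \eqref{eq:diam-gen}:
\[
  b\,2^{-\gen(P)/2}\ge\diam(P)\ge\diam(M)\ge a\,2^{-\gen(M)/2}.
\]
This yields $\gen(P)\le\gen(M)+2\log_2(b/a)$. Hence one can take $\Gamma:=\lfloor 2\log_2(b/a)\rfloor+1$, which gives $\gen(S)\le\gen(M)+\Gamma$. The constant depends only on $\Sigma(\T_0)$.

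For the spatial bound, I would use the triangle inequality $|z_S-z_M|\le|z_S-z_P|+|z_P-z_M|$. Since $S\subset P$, both barycenters lie in $P$, so $|z_S-z_P|\le\diam(P)$. \Cref{cor:path-reach} gives $|z_P-z_M|\le C_{\rm path}\diam(P)$. Together these give $|z_S-z_M|\le(1+C_{\rm path})\diam(P)$. Then \eqref{eq:diam-gen} together with $\gen(P)=\gen(S)-1$ gives
\[
  \diam(P)\le b\,2^{-\gen(P)/2}=\sqrt2\,b\,2^{-\gen(S)/2},
\]
so the claim holds with $D:=\sqrt2\,b\,(1+C_{\rm path})$.

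There is no serious obstacle, since the heavy lifting is done by \Cref{lem:rho} through \Cref{cor:path-reach}. The one point needing care is justifying that the corollary applies in the intermediate mesh $\widehat\T$ rather than in $\T$. This requires the inductive observation that $\widehat\T$ stays conforming and consists of possible descendants of $\T_0$. The constants do not depend on the iteration index, which is what allows the lemma to be stated without assuming termination.
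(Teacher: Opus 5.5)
Your proposal is correct and matches the paper's proof almost line by line. You get the generation bound from \Cref{cor:path-reach} combined with both sides of \eqref{eq:diam-gen}, and the spatial bound from $|z_S-z_P|\le\diam(P)$ together with $\gen(P)=\gen(S)-1$, which yields the same $D=\sqrt2\,b\,(1+C_{\rm path})$. Your choice $\Gamma=\lfloor 2\log_2(b/a)\rfloor+1$ uses integrality of generations and is a harmless sharpening of the paper's $1+\lceil 2\log_2(b/a)\rceil$. Your explicit induction that $\widehat\T$ remains a conforming mesh of possible descendants spells out what the paper leaves implicit.
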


\begin{proof}
Fix an iteration and $S\in\mathcal C$.  Let $P\in\mathcal S$ satisfy
$S\in\Children(P,e)$.  By \Cref{alg:refine-one}, $P$ belongs to the terminal
set of the current LEPP starting at $M$.

Corollary~\ref{cor:path-reach} gives $\diam(P)\ge\diam(M)$.  Using
\eqref{eq:diam-gen},
\[
  b2^{-\gen(P)/2}
  \ge\diam(P)
  \ge\diam(M)
  \ge a2^{-\gen(M)/2}.
\]
Consequently,
\[
  \gen(P)
  \le \gen(M)+2\log_2(b/a).
\]
Since $\gen(S)=\gen(P)+1$, the generation bound follows with
\begin{equation*}
  \Gamma:=1+\left\lceil2\log_2(b/a)\right\rceil.
\end{equation*}

For the distance bound, the barycenter of a child lies in its parent, so
$|z_S-z_P|\le\diam(P)$.  Corollary~\ref{cor:path-reach} and
\eqref{eq:diam-gen} give
\[
  |z_S-z_M|
  \le (1+C_{\rm path})\diam(P)
  \le \sqrt2(1+C_{\rm path})b\,2^{-\gen(S)/2}.
\]
Thus the distance bound holds with
$D:=\sqrt2(1+C_{\rm path})b$.
\end{proof}

\begin{corollary}[Termination]
\label{cor:refine-one-termination}
The call $\RefineOne(\T,M)$ terminates and returns a
conforming refinement of $\T$ in which $M$ has been bisected.
\end{corollary}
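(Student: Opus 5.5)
The plan is to show that the while loop in \Cref{alg:refine-one} can run only finitely many times. Once that is known, the remaining assertions are immediate. Each iteration replaces the terminal set $\mathcal S$ by the children of its members along the common terminal edge $e$. A boundary edge is split in its single triangle; an interior terminal edge is split in both adjacent triangles at once. Hence each intermediate mesh $\widehat\T$ is conforming, and it is a refinement of $\T$ because triangles are only replaced by their children. When the loop exits we have $M\notin\widehat\T$. Triangles are only ever removed by bisection, so $M$ has been bisected.

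For finiteness, I will use Lemma~\ref{lem:single-mark} in two ways. Every triangle created in any iteration satisfies $\gen(S)\le\gen(M)+\Gamma$ and $|z_S-z_M|\le D\,2^{-\gen(S)/2}\le D$. Each iteration removes at least one triangle of $\widehat\T$ and replaces it by its two children. The natural potential is therefore the multiset of generations, or more simply the quantity
\[
  \Phi(\widehat\T):=\sum_{K\in\widehat\T}\bigl(\gen(M)+\Gamma+1-\gen(K)\bigr)^{+}\ \text{(or a weighted count)}.
\]
A cleaner route is the following. Every triangle of every intermediate mesh is either a triangle of the original finite mesh $\T$ or a triangle created in some iteration. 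Created triangles are possible descendants of $\T_0$ with generation at most $G:=\max_{K\in\T}\gen(K)+\Gamma$. Only finitely many such triangles exist, since each initial triangle has at most $2^{g}$ descendants of generation $g$. This finiteness already uses the fact that, for each refinement step, the tied-longest choices give at most three options, so the count is at most $3^g\cdot 2^g$.

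The key observation is then this: each iteration strictly increases the set of triangles ever removed, because the triangles in $\mathcal S$ are removed and can never reappear. A removed triangle's interior is thereafter covered by strictly smaller triangles, and the mesh only refines. All these triangles lie in the finite set of possible descendants of generation at most $G$. Therefore the number of iterations is bounded by the cardinality of that set.

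The main obstacle is the claim that a removed triangle never returns. I would justify it by noting that the triangles of $\widehat\T$ tile $\overline\Omega$ with disjoint interiors. After $K$ is removed, every point of its interior lies in a triangle of generation at least $\gen(K)+1$, since later steps only bisect further. A triangle equal to $K$ would have generation $\gen(K)$. Moreover, its area $2^{-\gen(K)}|R(K)|$ could not fit among these strictly smaller pieces of the same root $R(K)$. So $K$ cannot reappear, and the loop terminates.
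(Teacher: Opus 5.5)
Your proof is correct and follows the same route as the paper. The generation bound from Lemma~\ref{lem:single-mark} confines every bisected triangle to a finite set, each iteration bisects a triangle that was never bisected before, and so the loop must stop; your argument that a removed triangle cannot reappear spells out what the paper's phrase ``previously unbisected'' takes for granted. The one step to add is the paper's remark that each inner call to \LEPP{} itself terminates, by strict diameter growth on the finite current mesh, so that every single iteration is finite.
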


\begin{proof}
Each call to \LEPP{} terminates on the current finite mesh by strict
diameter growth.  By Lemma~\ref{lem:single-mark}, every triangle bisected
in an iteration has generation at most $\gen(M)+\Gamma-1$.
Since the initial mesh is finite and each bisection produces two children,
only finitely many triangles of these generations can occur in any execution.
Every iteration bisects at least one previously unbisected triangle,
so the loop terminates.  Its
stopping condition implies that $M$ has been bisected, and conformity follows
from the simultaneous replacement in \Cref{alg:refine-one}.
\end{proof}

\section{The cumulative charging argument}
\label{sec:charging}

We first introduce the charging function.  Consider the refinement sequence
\begin{equation*}
  \T_{\ell+1}=\Refine(\T_\ell,\M_\ell),
  \qquad \M_\ell\subseteq\T_\ell,
  \qquad 0\le\ell<L.
\end{equation*}
Let $\mathfrak M$ be the set of marks of all calls to \RefineOne{} in this
sequence.  Since each call bisects and thereby removes its mark from the
current mesh, no triangle is the mark of two distinct calls.  In particular,
\begin{equation*}
  \card\mathfrak M
  \le\sum_{\ell=0}^{L-1}\card\M_\ell.
\end{equation*}
Set $\Nnew:=\T_L\setminus\T_0$.  Let $\Gamma$ and $D$ be as in
Lemma~\ref{lem:single-mark}, and put $E:=2D$.

\begin{definition}[Charging function]\label{def:lambda}
For $T\in\Nnew$ and $M\in\mathfrak M$, define the charge from $M$ to $T$ by
\begin{equation*}
\lambda(T,M):=
\begin{cases}
2^{(\gen(T)-\gen(M))/2},
&\begin{aligned}
  &\gen(T)\le\gen(M)+\Gamma,\\[-0.1em]
  &|z_T-z_M|\le E2^{-\gen(T)/2},
\end{aligned}\\[0.75em]
0,&\text{otherwise.}
\end{cases}
\end{equation*}
\end{definition}

We first bound
$\sum_{T\in\Nnew}\lambda(T,M)$ uniformly in $M\in\mathfrak M$.  Set
\begin{equation*}
  A_{\min}:=\min_{T\in\T_0}|T|>0.
\end{equation*}

\begin{lemma}[Packing at one generation]\label{lem:packing}
There is $C_{\rm pack}=C_{\rm pack}(\T_0)<\infty$ such that, for every
point $x\in\R^2$ and every integer $g\ge0$, at most $C_{\rm pack}$ triangles
$T\in\T_L$ can satisfy simultaneously
\[
  \gen(T)=g,
  \qquad |z_T-x|\le E2^{-g/2}.
\]
\end{lemma}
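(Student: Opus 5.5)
We will use a standard area-packing argument. The two inputs are that triangles of $\T_L$ have pairwise disjoint interiors, and that a triangle of generation $g$ has area exactly $2^{-g}|R(T)|\ge 2^{-g}A_{\min}$ by \eqref{eq:area-generation}. Fix $x$ and $g$, and let $\mathcal F$ be the set of triangles $T\in\T_L$ with $\gen(T)=g$ and $|z_T-x|\le E2^{-g/2}$.

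First we confine each $T\in\mathcal F$ to a ball. Since $z_T\in T$, every point $y\in T$ satisfies $|y-z_T|\le\diam(T)$. By \eqref{eq:diam-gen} we have $\diam(T)\le b2^{-g/2}$. Hence
\[
  T\subset B\bigl(x,(E+b)2^{-g/2}\bigr).
\]

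Next we compare areas. The interiors of distinct members of $\mathcal F$ are disjoint, since they all belong to the conforming mesh $\T_L$. Summing areas therefore gives
\[
  \card\mathcal F\cdot A_{\min}2^{-g}
  \le\sum_{T\in\mathcal F}|T|
  \le\pi(E+b)^2 2^{-g}.
\]
Thus $\card\mathcal F\le C_{\rm pack}:=\pi(E+b)^2/A_{\min}$. This constant depends only on $\T_0$, because $E=2D$ and $b$ are determined by $\T_0$ and the fixed choice of $\Sigma$.

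There is no real obstacle in this argument. The only point needing care is that the area lower bound uses the exact relation \eqref{eq:area-generation} rather than shape regularity. Likewise, the diameter upper bound comes from the spectrum bound \eqref{eq:diam-gen}, which holds for all possible descendants and hence for every element of $\T_L$.
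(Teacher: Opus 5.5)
Your proposal is correct and follows the paper's proof essentially verbatim: the paper also confines each such triangle to $B\bigl(x,(E+b)2^{-g/2}\bigr)$ via \eqref{eq:diam-gen}, and uses the area lower bound $A_{\min}2^{-g}$ from \eqref{eq:area-generation}. It then compares total area against the ball using disjoint interiors, arriving at the same constant $C_{\rm pack}=\pi(E+b)^2/A_{\min}$.
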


\begin{proof}
By \eqref{eq:diam-gen}, every such $T$ is contained in the ball
\[
  B\bigl(x,(E+b)2^{-g/2}\bigr).
\]
By \eqref{eq:area-generation},
$|T|=2^{-g}|R(T)|\ge A_{\min}2^{-g}$.  These triangles have pairwise disjoint
interiors.  Comparing their total area with the area of the containing ball
gives
\[
  \card\bigl\{T\in\T_L:\gen(T)=g,\ |z_T-x|\le E2^{-g/2}\bigr\}
  \le \frac{\pi(E+b)^2}{A_{\min}}.
\]
\end{proof}

\begin{proposition}[Bounded charge from one mark]\label{prop:upper-dollar}
For every $M\in\mathfrak M$,
\begin{equation*}
  \sum_{T\in\Nnew}\lambda(T,M)
  \le C_\uparrow,
\end{equation*}
where
\begin{equation*}
  C_\uparrow
  :=C_{\rm pack}
    \frac{2^{(\Gamma+1)/2}}{\sqrt2-1}
\end{equation*}
depends only on $\T_0$ and the rule.
\end{proposition}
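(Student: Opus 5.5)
Fix $M\in\mathfrak M$ and write $g_M:=\gen(M)$. We organize the sum $\sum_{T\in\Nnew}\lambda(T,M)$ by the generation $g:=\gen(T)$. Only triangles with $\lambda(T,M)\neq0$ contribute, so by Definition~\ref{def:lambda} we may restrict to $g\le g_M+\Gamma$ and to triangles with $|z_T-z_M|\le E2^{-g/2}$. Since $\Nnew\subseteq\T_L$, Lemma~\ref{lem:packing} applies with the point $x:=z_M$. For each fixed integer $g\ge0$ it shows that at most $C_{\rm pack}$ triangles $T\in\Nnew$ satisfy both $\gen(T)=g$ and the distance condition. Each such triangle carries charge exactly $2^{(g-g_M)/2}$.

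Summing over generations gives
\[
  \sum_{T\in\Nnew}\lambda(T,M)
  \le C_{\rm pack}\sum_{g=0}^{g_M+\Gamma}2^{(g-g_M)/2}
  \le C_{\rm pack}\sum_{k=-\infty}^{\Gamma}2^{k/2},
\]
where $k:=g-g_M$ and we have extended the range to all integers $k\le\Gamma$, which only enlarges the nonnegative sum. This is a geometric series dominated by its top term:
\[
  \sum_{k\le\Gamma}2^{k/2}=\frac{2^{\Gamma/2}}{1-2^{-1/2}}
  =\frac{2^{(\Gamma+1)/2}}{\sqrt2-1}.
\]
This yields exactly the constant $C_\uparrow$ in the statement.

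The argument contains no real obstacle. The only point to check is that Lemma~\ref{lem:packing} is uniform in both the point $x$ and the generation $g$, so that the same bound $C_{\rm pack}$ can be used at every level. It is, because it relies only on the area identity \eqref{eq:area-generation}, the diameter bound \eqref{eq:diam-gen}, and the disjointness of interiors in $\T_L$. The constants $\Gamma$ and $E=2D$ come from Lemma~\ref{lem:single-mark}, so $C_\uparrow$ depends only on $\T_0$ and on the rule.
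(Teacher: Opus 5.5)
Your proof is correct and is essentially the paper's own argument. Like the paper, you restrict to generations $g\le\gen(M)+\Gamma$, apply Lemma~\ref{lem:packing} at $x=z_M$ at each generation, and bound the resulting geometric series by $2^{(\Gamma+1)/2}/(\sqrt2-1)$ to obtain $C_\uparrow$.
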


\begin{proof}
Write $m=\gen(M)$.  If $\lambda(T,M)\ne0$, then
$g:=\gen(T)\le m+\Gamma$.  Lemma~\ref{lem:packing}, with $x=z_M$, gives
at most $C_{\rm pack}$ such triangles at every fixed
generation $g$.  Hence
\begin{align*}
  \sum_{T\in\Nnew}\lambda(T,M)
  &\le C_{\rm pack}
       \sum_{g=0}^{m+\Gamma}2^{(g-m)/2}\\
  &\le C_{\rm pack}
       \frac{2^{(\Gamma+1)/2}}{\sqrt2-1}.
\end{align*}
\end{proof}

\begin{proposition}[Unit charge]
\label{prop:lower-dollar}
For every $T\in\Nnew$,
\begin{equation*}
  \sum_{M\in\mathfrak M}\lambda(T,M)\ge1.
\end{equation*}
\end{proposition}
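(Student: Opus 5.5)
The plan is to follow the chain of calls that forced $T$ into existence and run the Binev--Dahmen--DeVore alternative on it: either some mark in the chain is coarse enough and close enough to give $T$ a unit charge by itself, or the marks traversed before the chain leaves the admissible ball already carry total charge at least $1$.

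\emph{Building the chain.} Fix $T\in\Nnew$. Since $T\notin\T_0$, it was created, as an element of some $\mathcal C$, during a call to \RefineOne{} with a mark $M_1\in\mathfrak M$. Recursively, as long as $M_k\notin\T_0$, the triangle $M_k$ was itself created during an earlier call with a mark $M_{k+1}\in\mathfrak M$. Creation times strictly decrease along the chain and there are finitely many calls, so the chain is finite and ends at some $M_r\in\T_0$. The marks $M_k$ are pairwise distinct, since each triangle is the mark of at most one call. Set $M_0:=T$. \Cref{lem:single-mark} applies to each creation step and gives, for $0\le k<r$,
\begin{equation*}
  \gen(M_k)\le\gen(M_{k+1})+\Gamma,
  \qquad
  |z_{M_k}-z_{M_{k+1}}|\le D\,2^{-\gen(M_k)/2}.
\end{equation*}

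\emph{Choosing the stopping index.} Let $s\ge1$ be the first index with $\gen(M_s)\le\gen(T)$. It exists because $\gen(M_r)=0\le\gen(T)$. For $1\le k<s$ we have $\gen(M_k)>\gen(T)$. In particular $\gen(T)\le\gen(M_k)+\Gamma$ holds trivially for these $k$, so for them only the distance condition in \Cref{def:lambda} can fail. At the stopping index itself, $\gen(M_s)\le\gen(T)$ and the first step of the chain give $\gen(M_s)\le\gen(M_{s-1})\le\dots$; more simply, for $M_s$ we use $\gen(T)$ versus $\gen(M_s)$ together with the single-step bound $\gen(M_{s-1})\le\gen(M_s)+\Gamma$. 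When $s=1$ this gives $\gen(T)\le\gen(M_1)+\Gamma$ directly. When $s>1$, I need $\gen(T)<\gen(M_{s-1})\le\gen(M_s)+\Gamma$. Whenever $M_s$ satisfies the distance condition, its charge is $2^{(\gen T-\gen M_s)/2}\ge1$ and we are done.

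\emph{The alternative.} Suppose $\lambda(T,M_k)=0$ for some $k\le s$, and let $j$ be the first such index. Then all $M_k$ with $k<j$ are admissible, and $M_j$ violates the distance condition, since its generation condition holds as shown above. The triangle inequality along the chain gives
\begin{equation*}
  2D\,2^{-\gen(T)/2}<|z_T-z_{M_j}|
  \le D\,2^{-\gen(T)/2}+\sum_{k=1}^{j-1}D\,2^{-\gen(M_k)/2}.
\end{equation*}
Hence
\begin{equation*}
  \sum_{k=1}^{j-1}2^{(\gen(T)-\gen(M_k))/2}>1.
\end{equation*}
The left-hand side equals $\sum_{k<j}\lambda(T,M_k)$, because the $M_k$ are distinct and admissible. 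So the total charge exceeds $1$. Otherwise no index $k\le s$ fails, and $\lambda(T,M_s)\ge1$.

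\emph{Main obstacle.} The delicate point is the bookkeeping at the first failing index: both conditions of \Cref{def:lambda} must hold for every earlier mark, and the generation condition must hold at $M_j$ itself. This is exactly where the choice $E=2D$ is used, since it absorbs the first step from $T$ to $M_1$. It is also where the generations $\gen(M_k)>\gen(T)$ for $k<s$ are essential, as they make each term of the distance sum at most $D\,2^{-\gen(T)/2}$. Non-monotonicity of generations along the chain is harmless in this argument, because the chain is never required to be geometric.
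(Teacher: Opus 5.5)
Your proof is correct and is essentially the paper's argument: the same chain of creating marks from $T$ back to $\T_0$, the same stopping index $s$ (you use $\gen(M_s)\le\gen(T)$ where the paper uses strict inequality, which only saves you the observation $\gen(T)\ge1$), and the same dichotomy in which $E=2D$ absorbs the first step $T\to M_1$, so that the admissible marks preceding the first exit from $\overline B(z_T,E2^{-\gen(T)/2})$ carry total charge exceeding $1$. Delete the abandoned fragment ``$\gen(M_s)\le\gen(M_{s-1})\le\dots$'' in your second paragraph: generations need not be monotone along the chain, and the argument you actually use there does not rely on it.
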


\begin{proof}
Fix $T\in\Nnew$ and set $K_0:=T$.  Whenever $K_{r-1}\notin\T_0$,
let $K_r$ be the mark of the unique call to \RefineOne{}
that created $K_{r-1}$.  Each mark exists before its call and, unless it
belongs to $\T_0$, was created during an earlier call.  Thus the construction ends at
$K_q\in\T_0$, and $K_1,\ldots,K_q$ are distinct members of $\mathfrak M$.
By Lemma~\ref{lem:single-mark}, for $1\le r\le q$,
\begin{align*}
  \gen(K_{r-1})&\le\gen(K_r)+\Gamma,\\
  |z_{K_{r-1}}-z_{K_r}|&\le
  D2^{-\gen(K_{r-1})/2}.
\end{align*}
Put $t:=\gen(T)$ and note that $t\ge1$ whereas
$\gen(K_q)=0$.  Let
\[
  s:=\min\{1\le r\le q:\gen(K_r)<t\}.
\]
Then $t\le\gen(K_{s-1})\le\gen(K_s)+\Gamma$, so
\begin{equation*}
  t-\Gamma\le\gen(K_s)<t.
\end{equation*}

Set $B:=\overline B(z_T,E2^{-t/2})$.  Suppose first that
\[
  |z_T-z_{K_r}|\le E2^{-t/2}
  \qquad(1\le r\le s).
\]
Since $t\le\gen(K_s)+\Gamma$, Definition~\ref{def:lambda} gives
\[
  \lambda(T,K_s)
  =2^{(t-\gen(K_s))/2}\ge1.
\]

Otherwise, let $j\in\{1,\ldots,s\}$ be the first index for which
\begin{equation*}
  |z_T-z_{K_j}|>E2^{-t/2}.
\end{equation*}
Since $|z_T-z_{K_1}|\le D2^{-t/2}<E2^{-t/2}$, necessarily $j\ge2$.
For $1\le r<j$, the barycenter $z_{K_r}$ lies in the ball $B$, and
$\gen(K_r)\ge t$ by the minimality of $s$.  Thus
$\lambda(T,K_r)=2^{(t-\gen(K_r))/2}$.  The triangle inequality and
Lemma~\ref{lem:single-mark} give
\begin{align*}
  E2^{-t/2}
  &<|z_{K_0}-z_{K_j}|\\
  &\le D\sum_{r=1}^{j}2^{-\gen(K_{r-1})/2}\\
  &=D2^{-t/2}
    \left(1+\sum_{r=1}^{j-1}
    2^{(t-\gen(K_r))/2}\right).
\end{align*}
Since $E=2D$, it follows that
\[
  \sum_{r=1}^{j-1}\lambda(T,K_r)>1.
\]
This proves the proposition in both cases, illustrated in
\Cref{fig:charge-cases}.
\end{proof}

\begin{figure}[t]
\centering
\begin{tikzpicture}[scale=0.95,>=Latex,line cap=round,
  every node/.style={font=\small}]
  \begin{scope}
    \draw[thick,fill=black!3] (0,0) circle (1.65);
    \draw[->,thick] (0,0)--(0.45,0.42);
    \draw[->,thick] (0.45,0.42)--(1.04,0.75);
    \fill (0,0) circle (1.6pt) node[below left] {$z_T$};
    \fill (0.45,0.42) circle (1.6pt) node[above left] {$\cdots$};
    \fill (1.04,0.75) circle (1.6pt) node[above left] {$z_{K_s}$};
    \draw[densely dotted] (0,0)--(0,-1.65)
      node[midway,right] {$E2^{-t/2}$};
    \node[align=center] at (0,-2.25)
      {$\gen(K_s)<t$\\$z_{K_1},\ldots,z_{K_s}\in B$};
  \end{scope}
  \begin{scope}[xshift=5.8cm]
    \draw[thick,fill=black!3] (0,0) circle (1.65);
    \draw[->,thick] (0,0)--(0.45,0.25);
    \draw[->,thick] (0.45,0.25)--(1.10,0.60);
    \draw[->,thick] (1.10,0.60)--(1.76,0.86);
    \fill (0,0) circle (1.6pt) node[below left] {$z_T$};
    \fill (0.45,0.25) circle (1.6pt) node[above left] {$\cdots$};
    \fill (1.10,0.60) circle (1.6pt);
    \node at (1.10,0.10) {$z_{K_{j-1}}$};
    \fill (1.76,0.86) circle (1.6pt) node[above] {$z_{K_j}$};
    \node[align=center] at (0.25,-2.25)
      {$j\le s$\\$z_{K_1},\ldots,z_{K_{j-1}}\in B,\quad z_{K_j}\notin B$};
  \end{scope}
\end{tikzpicture}
\caption{The two cases in \Cref{prop:lower-dollar}.
The sequence of barycenters stays in $B$ through $z_{K_s}$ (left), or first
leaves $B$ at $z_{K_j}$, $j\le s$ (right).}
\label{fig:charge-cases}
\end{figure}

\begin{proof}[Proof of Theorem~\ref{thm:main-intro}]
Propositions~\ref{prop:upper-dollar} and \ref{prop:lower-dollar} give
\begin{align*}
  \card\Nnew
  &\le \sum_{T\in\Nnew}
       \sum_{M\in\mathfrak M}\lambda(T,M)\\
  &=\sum_{M\in\mathfrak M}
       \sum_{T\in\Nnew}\lambda(T,M)\\
  &\le C_\uparrow\card\mathfrak M\\
  &\le C_\uparrow
       \sum_{\ell=0}^{L-1}\card\M_\ell.
\end{align*}

Since $\Nnew=\T_L\setminus\T_0$,
\[
  \card\T_L-\card\T_0
  \le\card\Nnew.
\]
Together, these bounds prove \eqref{eq:main-intro} with
$C_{\rm LEB}:=C_\uparrow$.
\end{proof}

\section{Numerical experiments}
\label{sec:experiments}

We compare TP-LEB with newest-vertex bisection (NVB) and red--green
refinement (RG) on a West Lake domain.  The comparison concerns cumulative
mesh growth, residual-estimator reduction, and mesh shape quality under a
common bound on the number of elements.

\subsection{Experimental setting}

The test domain is the main connected water component of West Lake in
Hangzhou, taken from the OpenStreetMap water
multipolygon~\cite{OpenStreetMap}.  Its island and causeway boundaries
are retained; a separate pond within an island is omitted.
The initial mesh has $2\,333$ triangles and is generated by the
Frontal--Delaunay algorithm of Gmsh~\cite{Gmsh09}.

On this domain we solve
\begin{equation*}
  -\Delta u=1\quad\text{in }\Omega,
  \qquad u=0\quad\text{on }\partial\Omega,
\end{equation*}
using continuous piecewise affine finite elements.  The Dirichlet
condition is imposed on every boundary component.
The experiments are run in MATLAB R2026a, with sparse direct solution of
the discrete systems.

For the Galerkin solution $u_\T$, define the residual estimator by
\begin{equation*}
\begin{aligned}
  \eta_K^2
  &:=\diam(K)^2|K|+
     \frac12\sum_{e\in\mathcal E_\T^{\rm int}(K)}
     |e|\bigl\|\llbracket\nabla u_\T\cdot n_e\rrbracket\bigr\|_{L^2(e)}^2,
     \qquad K\in\T,\\
  \eta_\T&:=\left(\sum_{K\in\T}\eta_K^2\right)^{1/2},
\end{aligned}
\end{equation*}
where $\mathcal E_\T^{\rm int}(K)$ is the set of interior edges of $K$.
Here $n_e$ is a fixed unit normal on $e$, and the bracket $\llbracket\cdot\rrbracket$ denotes the
jump across that edge.  We report estimator reduction without using
an exact solution.

All methods start from the same initial mesh.
For each method, $\T_\ell$ is the mesh after $\ell$ adaptive refinements,
and refinement stops at the largest level $L$ satisfying
$\card\T_L\le 40\,000$.
For $0\le\ell<L$, the marked set $\M_\ell\subseteq\T_\ell$ is obtained
by taking the largest indicators until
\[
  \sum_{K\in\M_\ell}\eta_K^2
  \ge0.35\sum_{K\in\T_\ell}\eta_K^2.
\]
Refining $\M_\ell$ gives $\T_{\ell+1}$.
Each method uses its own current solution and indicators.
TP-LEB follows \Cref{alg:lepp,alg:refine-one,alg:refine}; NVB is initialized
with a longest reference edge on every triangle~\cite{KPP13,Chen09}.
RG uses temporary green completion: a marked green triangle triggers
replacement of its green patch by red refinement of the parent
triangle~\cite{Mitchell17}.
For TP-LEB, a squared edge length $s$ is treated as tied with the largest
squared edge length $s_{\max}$ whenever
$s_{\max}-s\le10^{-12}\max\{1,s_{\max}\}$.

Write $N_\ell=\card\T_\ell$ and $\eta_\ell=\eta_{\T_\ell}$.  We measure cumulative
element growth per marked triangle by
\begin{equation*}
  G_\ell:=\frac{N_\ell-N_0}{\sum_{j=0}^{\ell-1}\card\M_j},
  \qquad \ell=1,\ldots,L.
\end{equation*}
For RG, the numerator includes the net change after green-patch replacement;
its local refinement patterns differ from binary bisection.
We also record the normalized shape parameter
$\gamma(\T_L)/\gamma(\T_0)$, where
\[
  \gamma(\T):=\max_{K\in\T}\frac{\diam(K)}{2r_K},
\]
and $r_K$ is the inradius of $K$.

\begingroup
\setlength{\intextsep}{8pt}
\begin{figure}[!htbp]
\centering
\includegraphics[width=0.9\textwidth]{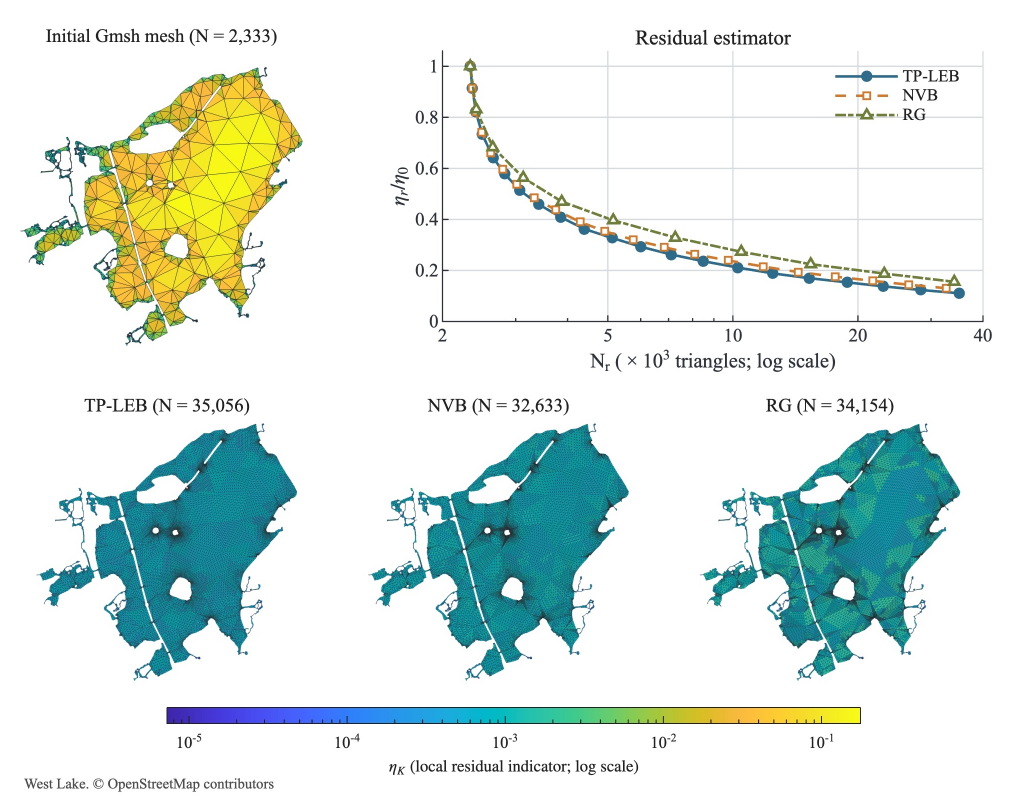}
\caption{West Lake comparison.  Top: initial mesh and estimator reduction;
bottom: final indicator distributions.  Data: \copyright\ OpenStreetMap
contributors (ODbL)~\cite{OpenStreetMap}.}
\label{fig:west-lake-comparison}
\end{figure}
\endgroup

\subsection{Estimator reduction and shape quality}

\Cref{tab:numerical-summary} collects the results at termination.
The cumulative growth ratios for TP-LEB and NVB are
close, whereas RG has a larger ratio.  TP-LEB has the smallest final
shape ratio in this test.

\begin{table}[!htbp]
\centering
\small
\setlength{\tabcolsep}{4.5pt}
\caption{West Lake results under the $40\,000$-element bound.}
\label{tab:numerical-summary}
\begin{tabular}{lrrrrr}
\toprule
Method & $L$ & $N_L$ & $G_L$ & $\eta_L/\eta_0$ & $\gamma(\T_L)/\gamma(\T_0)$ \\
\midrule
TP-LEB & 20 & 35,056 & 1.354 & 0.111 & 1.897 \\
NVB     & 20 & 32,633 & 1.406 & 0.130 & 1.942 \\
RG      & 10 & 34,154 & 4.852 & 0.155 & 1.994 \\
\bottomrule
\end{tabular}
\end{table}

\Cref{fig:west-lake-comparison} shows the estimator
ratio $\eta_\ell/\eta_0$ against $N_\ell$ for $\ell=0,\ldots,L$.
In the later part of the sampled mesh range, TP-LEB gives smaller residual
estimators than NVB and RG at comparable element counts.
The final values correspond to different meshes; the curves provide
the comparison over their common element range.

\end{document}